\title{Irreducible subshifts associated with $\tilde A_2$ buildings.}     
\date{March 4, 2002}
\author{Guyan Robertson }
\address{Mathematics Department, University of Newcastle, Callaghan, NSW 
2308, Australia}
\email{guyan@maths.newcastle.edu.au}  
\author{Tim Steger}
\address{Istituto Di Matematica e Fisica, Universit\`a degli Studi di
Sassari, Via Vienna 2, 07100 Sassari, Italia}
\email{steger@ssmain.uniss.it}
\subjclass{Primary 51E24, 51E20; Secondary 37B50.}
\keywords{affine building, subshift of finite type, tiling system}
\thanks{This research was supported by the Australian Research Council.}
\chardef\bslash=`\\ 
\def\verbatim{\interlinepenalty\@M \@verbatim 
  \leftskip\@totalleftmargin\advance\leftskip2pc 
  \frenchspacing\@vobeyspaces \@xverbatim} 
\newtheorem{theorem}{Theorem}[section] 
\newtheorem{lemma}[theorem]{Lemma}
\theoremstyle{definition}
\newtheorem{example}[theorem]{Example}
\newtheorem{remark}[theorem]{Remark}
\newcounter{picture} 
\newcommand{\FF}{{\mathbb F}} 
\newcommand{\KK}{{\mathbb K}}
\newcommand{\ZZ}{{\mathbb Z}} 
\newcommand{\cB}{{\mathcal B}}
\newcommand{\cT}{{\mathcal T}} 
\newcommand{\G}{{\Gamma}}
\newcommand{\PGL}{{\text{\rm{PGL}}}} 
\newcommand{\wt}{\widetilde}
\newcommand{\pgl}{{\text{\rm{PGL}}}}
\begin{document} 

\def\Proof  {{\bf Proof.}\par}
\def\bs {\backslash}
\def\diam   {{\text{\rm diam}}}

\begin{abstract}
Let $\Gamma$ be a group of type rotating automorphisms of a building $\cB$ of type $\widetilde A_2$, 
and suppose that $\Gamma$ acts freely and transitively on the vertex set of $\cB$. The apartments of $\cB$ are tiled by triangles, labelled according to $\Gamma$-orbits. Associated with these tilings 
there is a natural subshift of finite type, which is shown to be irreducible. The key element in the proof is a combinatorial result about finite projective planes.
\end{abstract}

\maketitle

\section{Introduction}

Let $\cB$ be a locally finite thick affine building of type~$\wt A_2$ \cite{gar}.
Such a building $\cB$ is a two dimensional simplicial complex which is a union of  two dimensional subcomplexes,
called {\it apartments}.
The apartments  are Coxeter complexes of type $\widetilde A_2$, which may be realized as a tilings of the Euclidean plane by equilateral triangles. 
Buildings of type $\widetilde A_2$ \texttt{}are contractible as topological spaces and are natural two dimensional analogues of homogeneous trees. (A homogeneous tree is  a building of type $\widetilde A_1$.) 
Each vertex $v$ of $\cB$ is labeled with a {\it type} $\tau (v) \in \ZZ/3\ZZ$,
and each chamber has exactly one vertex of each type.
An automorphism $\alpha$ of $\cB$ is said to be {\it type rotating} if there exists $i \in 
\ZZ/3\ZZ$ such that $\tau(\alpha(v)) = \tau(v)+i$ for all vertices $v \in \cB$.

\refstepcounter{picture}
\begin{figure}[htbp]
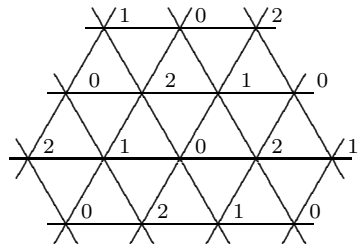
\label{fig5}
\centerline{
\beginpicture
\setcoordinatesystem units  <0.5cm, 0.866cm>        
\setplotarea x from -2.5 to 5, y from -1 to 2.5   
\put {$_1$} [l] at -1.6 2.2
\put {$_0$} [l] at  0.4  2.2
\put {$_2$} [l] at  2.4  2.2
\put {$_0$} [l] at  3.4  -0.8
\put {$_1$}  [l] at 1.4  -0.8
\put {$_2$} [l] at  -0.6  -0.8
\put {$_0$} [l] at -2.6  -0.8
\put {$_2$} [l]  at  -3.6 0.2
\put {$_1$}  [l]  at  -1.6 0.2
\put {$_1$} [l] at    4.4 0.2
\put {$_2$} [l] at    2.4 0.2
\put {$_0$}  [l] at 0.4  0.2
\put {$_0$} [l] at -2.4  1.2
\put {$_2$} [l] at -0.4 1.2
\put {$_1$} [l]  at  1.6  1.2
\put {$_0$} [l] at 3.6 1.2
\putrule from -2.5   2     to  2.5  2
\putrule from  -3.5 1  to 3.5 1
\putrule from -4.5 0  to 4.5 0
\putrule from -3.5 -1  to  3.5 -1
\setlinear
\plot  -4.3 -0.3  -1.7 2.3 /
\plot -3.3 -1.3  0.3 2.3 /
\plot -1.3 -1.3  2.3 2.3 /
\plot  0.7 -1.3  3.3 1.3 /
\plot  2.7 -1.3  4.3 0.3 /
\plot  -4.3 0.3  -2.7 -1.3 /
\plot  -3.3 1.3   -0.7 -1.3 /
\plot  -2.3 2.3   1.3 -1.3 /
\plot  -0.3 2.3   3.3 -1.3 /
\plot  1.7 2.3   4.3 -0.3 /
\endpicture
}
\caption{Part of an apartment in an $\tilde A_2$ building, showing vertex types.}
\end{figure}

If $\cB$ is a building of type $\widetilde A_2$ then the set $S_v$ of vertices of $\cB$ adjacent to any vertex
$v$ may be given the structure
of a finite projective plane. The projective planes corresponding to different vertices $v$ may
be nonisomorphic \cite{rt}, but they all have the same order $q$.
If a vertex $v$ of $\cB$ has type $i$ then the set $P$ of vertices of type $i+1$ in $S_v$
correspond to the $q^2+q+1$ points of the projective plane. The set $L$ of vertices of type $i+2$ in $S_v$
correspond to the $q^2+q+1$ lines of the projective plane. A point $p\in P$ and a line
$l\in L$ are incident in the projective plane if and only if there is an edge connecting them in the building.
The integer $q$ is called the order of the building and each edge in $\cB$ lies on $q+1$ triangles.
The reason for this is that every line in the projective plane is incident with $q+1$ points and every point is incident with $q+1$ lines. These facts will be used repeatedly below.

Suppose that $\cB$ is a building of type $\widetilde A_2$ and that  $\G$ is a group of type rotating automorphisms of $\cB$ which acts freely and transitively on the vertex set of $\cB$. Such groups $\G$ are called  $\widetilde A_2$ groups. In some ways, $\widetilde A_2$ groups are rank two analogues of finitely generated free groups, which act in a similar way on buildings of type $\widetilde A_1$ (trees).  The theory of $\widetilde A_2$ groups has been developed in detail in \cite{cmsz}. The $\widetilde A_2$ groups have a detailed combinatorial structure which makes them an ideal place to attack problems involving higher rank groups. 

An $\widetilde A_2$ group can be described as follows \cite [I,\S3]{cmsz}.
Let $(P,L)$ be a projective plane of order $q$. 
Let $\lambda : P \rightarrow L$ be a bijection (a {\it point--line
correspondence}).  Let ${\mathcal T}$
be a set of triples $(x, y, z)$ where $x, y, z \in P$, with the following
properties.

(i)  Given $x, y \in P$, then $(x, y, z) \in {\mathcal T}$ for some
$z \in P$ if and only if $y$ and $\lambda(x)$ are incident (i.e. $y \in 
\lambda(x)$).

(ii)  $(x, y, z) \in {\mathcal T} \Rightarrow (y, z, x) \in {\mathcal T}$.

(iii)  Given $x, y \in P$, then $(x, y, z) \in {\mathcal T}$ for at
most one $z \in P$.

${\mathcal T}$ is called a {\it triangle presentation}  compatible with
$\lambda$.  A complete list is given in \cite{cmsz} of all triangle
presentations for $q = 2$
and $q = 3$.

Let $\{a_x : x \in P\}$ be  $q^2 + q + 1$ distinct letters and form the
group
$$
\Gamma = \big\langle a_x, x \in P \ |\  a_x a_y a_z = 1 \hbox { for } (x,
y, z) \in {\mathcal T}
\big \rangle $$

 The Cayley  graph of $\Gamma$ with respect to the
generators $a_x, x \in P$ is the $1$-skeleton of an 
affine building of type $\widetilde A_2$.
It is convenient to identify the point $x \in P$ with the generator
$a_x \in \Gamma$. If $x\in P$ then the line $\lambda(x)$ corresponds to the inverse
$a_x^{-1}$ \cite{cmsz}. We therefore write $x^{-1}$ for $a_x^{-1}$ and identify $x^{-1}$ with
$\lambda(x)$. From now on the notation $x$ and
$\lambda(x)$ is used to represent $a_x$ and $a_{\lambda (x)}$ respectively. Note that, with this notation,
$${\mathcal T} = \{ (x,y,z) : x,y,z \in P  \hbox { and }   \ xyz = 1 \}.$$
This means that if $x,y \in P$ then $y \in \lambda(x)$ if and only if 
$xyz = 1$ for some $z \in P$.

The Cayley graph of $\Gamma$ will be regarded as a directed graph.
Vertices are identified with elements of $\G$ and a directed edge of the form $(a,as)$ with $a\in\G$ is labeled by a generator $s\in P$.  
Figure \ref{A4} illustrates a typical triangle based at a vertex $a\in\cB$. 
\refstepcounter{picture}
\begin{figure}[htbp]
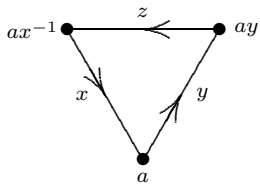
\label{A4}
\hfil
\centerline{
\beginpicture
\setcoordinatesystem units <1cm, 1.732cm>
\setplotarea  x from -2 to 2,  y from -0.1 to 1
\put {$\bullet$} at 0 0
\multiput {$\bullet$} at -1 1 *1 2 0 /
\put {$_a$} [t] at 0 -0.1
\put {$_{ax^{-1}}$} [r] at -1.1 1 
\put {$_{ay}$} [l] at 1.2 1
\arrow <10pt> [.2, .67] from  0.2 1 to 0 1
\arrow <10pt> [.2, .67] from  -0.7 0.7 to -0.5 0.5
\arrow <10pt> [.2, .67] from  0.3 0.3 to 0.5 0.5
\put {$_x$} [r ] at -0.7 0.5
\put {$_y$} [ l] at 0.7 0.5
\put {$_z$} [ b] at 0 1.1
\putrule from -1 1 to 1 1
\setlinear \plot -1 1 0 0 1 1 /
\endpicture.
}
\caption{A chamber based at a vertex $a$.}
\end{figure}

If $q=2$ there are eight $\widetilde A_2$ groups $\G$, all of which embed as lattices in the linear group $\PGL (3,\FF)$ over a local field $\FF$. If $q=3$ there are 89 possible $\widetilde A_2$ groups, of which 65  have buildings which are not associated with linear groups \cite{cmsz}.

\begin{example}\label{C1} The group C.1 of \cite{cmsz} has presentation
$$\langle
x_i, 0\le i \le 6\,
|\,
x_0x_0x_6,
x_0x_2x_3,
x_1x_2x_6,
x_1x_3x_5,
x_1x_5x_4,
x_2x_4x_5,
x_3x_4x_6
\rangle.$$
For this group, $q=2$, and there are $q^2+q+1=7$ generators.
Thus $P=\{x_0, \dots , x_6\}$ and $L=\{x_0^{-1}, \dots , x_6^{-1}\}$.
\end{example}

Two triangles lie in the same $\Gamma$-orbit if and only if they have the same edge labels, where each edge label is a generator of $\G$.
 The combinatorics of the finite projective plane $(P,L)$ shows that there are precisely $(q+1)(q^2+q+1)$ such labellings,
  which we refer to as {\it $\widetilde A_2$ triangle labellings}.
  Triangle labellings are in bijective correspondence with the elements of the triangle presentation $\cT$.
In Figure \ref{A6} we illustrate a triangle labelling (one of three) corresponding to the second relation in Example \ref{C1}.

\refstepcounter{picture}
\begin{figure}[htbp]
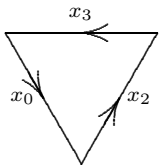
\label{A6}
\hfil
\centerline{
\beginpicture
\setcoordinatesystem units <1cm, 1.732cm>
\setplotarea  x from -2 to 2,  y from 0 to 1.2
\put {$_{x_3}$} [b ] at 0 1.1
\put {$_{x_2}$} [l ] at 0.6  0.5
\put {$_{x_0}$} [ r] at -0.6  0.5
\arrow <10pt> [.2, .67] from  0.2 1 to 0 1
\arrow <10pt> [.2, .67] from  -0.7 0.7 to -0.5 0.5
\arrow <10pt> [.2, .67] from  0.3 0.3 to 0.5 0.5
\putrule from -1 1 to 1 1 
\setlinear \plot -1 1 0 0 1 1  / 
\endpicture
}
\caption{A triangle labelling for the group C.1.}
\end{figure}

The edge labels (or equivalently the tiles) induce a tiling of the apartments in $\cB$,
as illustrated in Figure \ref{tiling}.

\refstepcounter{picture}
\begin{figure}[htbp]\label{tiling}
\hfil
\centerline{
\beginpicture
\setcoordinatesystem units <1cm, 1.732cm>
\setplotarea  x from -2 to 2,  y from -1 to 1.5
\put {$_{x_0}$} [r ] at -0.6 -0.5
\put {$_{x_2}$} [ l] at 0.6 -0.5
\put {$_{x_1}$} [r ] at 0.4  0.5
\put {$_{x_5}$} [ l] at 1.6  0.5
\put {$_{x_4}$} [r ] at -1.5  0.4
\put {$_{x_5}$} [ l] at -0.4  0.5
\put {$_{x_3}$} [t] at  0.0   -0.1
\put {$_{x_2}$} [b] at  -1.0  0.8
\put {$_{x_4}$} [b] at  1.0   0.8
\multiput {\beginpicture
\setcoordinatesystem units <1cm, 1.732cm>
\arrow <10pt> [.2, .67] from  0.2 1 to 0 1
\arrow <10pt> [.2, .67] from  -0.7 0.7 to -0.5 0.5
\arrow <10pt> [.2, .67] from  0.3 0.3 to 0.5 0.5
\putrule from -1 1 to 1 1 
\setlinear \plot -1 1 0 0 1 1 /
\endpicture}  at   0 0  1 1  -1 1  /
\endpicture
}
\caption{}
\end{figure}

There is a natural $\ZZ^2$ action on the space of
tiled apartments, which gives rise to a so called 2-dimensional subshift of finite type.

Consider the set of all apartments of $\cB$, with each triangle labelled as above.
Two matrices $M_1$, $M_2$ with entries in $\{0, 1\}$ are defined as follows.
If $\alpha, \beta \in \cT$, say that $M_1(\alpha, \beta)=1$ if and only if
the triangle labellings $\alpha=(a_1,a_2,a_3)$ and $\beta=(b_1,b_2,b_3)$ lie as shown on the right
of Figure \ref{XX1}. A similar definition applies for $M_2(\alpha,\gamma)=1$, as on the left of Figure \ref{XX1}.

\refstepcounter{picture}
\begin{figure}[htbp]\label{XX1}
\hfil
\centerline{
\beginpicture
\setcoordinatesystem units <0.5cm,0.866cm>  point at -6 0 
\setplotarea x from -5 to 5, y from -1 to 2         
\put{$_{\beta}$}   at  1  1.6
\put{$_{\alpha}$}   at  0 0.7
\put{$M_1(\alpha, \beta)=1$}   at  0 -1
\setlinear \plot -1 1  0 0  1 1  /
\setlinear \plot -1 1  0 2  1 1  -1 1 /
\setlinear \plot 1 1  2 2   0 2  /
\setcoordinatesystem units <0.5cm,0.866cm>  point at 6 0 
\setplotarea x from -5 to 5, y from -1 to 2         
\put{$_{\gamma}$}   at  -1  1.6
\put{$_{\alpha}$}   at  0 0.7
\put{$M_2(\alpha, \gamma)=1$}   at  0 -1
\setlinear \plot -1 1  0 0  1 1  /
\setlinear \plot -1 1  0 2  1 1  -1 1 /
\setlinear \plot -1 1  -2 2   0 2  /
\endpicture
}
\hfil
\caption{Definition of the transition matrices.}
\end{figure}

The commuting matrices $M_1$, $M_2$ are the transition matrices associated with a 2-dimensional subshift, with alphabet $\cT$.
This subshift is said to be irreducible if for all $\alpha, \beta \in \cT$, there exist integers $r, s > 0$ such that
 the $(\alpha, \beta)$ component of the matrix $M_1^rM_2^s$ satisfies
 $$(M_1^rM_2^s)(\alpha,\beta)>0.$$
 A geometric interpretation of this condition is that any two triangle labellings $\alpha,\beta\in \cT$ can be realized
 so that $\beta$ lies in some sector with base labelled triangle $\alpha$, as in Figure \ref{XX2}.

\refstepcounter{picture}
\begin{figure}[htbp]
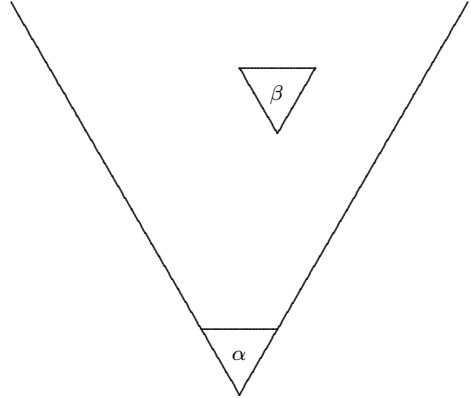
\label{XX2}
\hfil
\centerline{
\beginpicture
\setcoordinatesystem units <0.5cm,0.866cm>    
\setplotarea x from -6 to 6, y from 0.5 to 6         
\put {$_\alpha$}   at    0  0.6
\put {$_\beta$}    at    1  4.6
\setlinear
\plot -6  6   0 0   6 6 /
\plot -1  1    1 1 /
\plot 1 4  2 5  0 5   1 4  /
\endpicture
}
\hfil
\caption{The condition for irreducibility.}
\end{figure}

It is important for the simplicity of the $C^*$-algebras considered in \cite{rs}
that this subshift is irreducible.
In this article we prove irreducibility by showing that we can actually choose $r>0$ such that
$M_1^r(\alpha,\beta)>0.$
Thus $\beta$ lies on the wall of a sector as in Figure \ref{XX3}.  A similar
statement is true for the matrix $M_2$.

\refstepcounter{picture}
\begin{figure}[htbp]
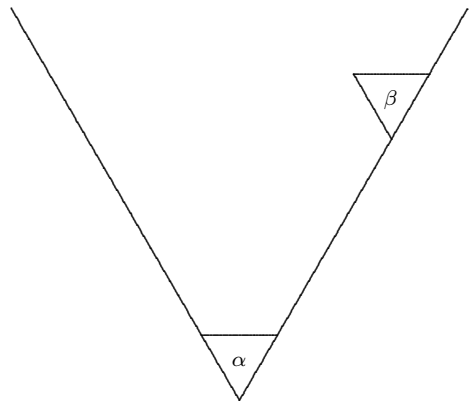
\label{XX3}
\hfil
\centerline{
\beginpicture
\setcoordinatesystem units <0.5cm,0.866cm>    
\setplotarea x from -6 to 6, y from 0.5 to 6         
\put {$_\alpha$}   at    0  0.6
\put {$_\beta$}    at    4  4.6
\setlinear
\plot -6  6   0 0   6 6 /
\plot -1  1    1 1 /
\plot 4 4  3 5  5 5  /
\endpicture
}
\hfil
\caption{Shifting along the wall of a sector.}
\end{figure}

Another way of viewing this is to say that irreducibility is proved for the one dimensional subshift associated with
tilings of strips between parallel walls in apartments, as illustrated in Figure \ref{strip}.
This is considerably stronger than irreducibility of the 2-dimensional subshift.

\refstepcounter{picture}
\begin{figure}[htbp]
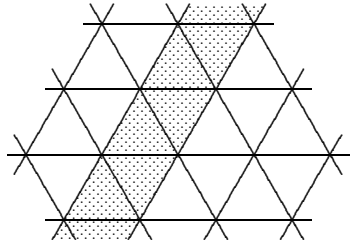
\label{strip}
\centerline{
\beginpicture
\setcoordinatesystem units  <0.5cm, 0.866cm>        
\setplotarea x from -3.5 to 5, y from -1.3 to 2.3   
\putrule from -2.5   2     to  2.5  2
\putrule from  -3.5 1  to 3.5 1
\putrule from -4.5 0  to 4.5 0
\putrule from -3.5 -1  to  3.5 -1
\setlinear
\plot  -4.3 -0.3  -1.7 2.3 /
\plot -3.3 -1.3  0.3 2.3 /
\plot -1.3 -1.3  2.3 2.3 /
\plot  0.7 -1.3  3.3 1.3 /
\plot  2.7 -1.3  4.3 0.3 /
\plot  -4.3 0.3  -2.7 -1.3 /
\plot  -3.3 1.3   -0.7 -1.3 /
\plot  -2.3 2.3   1.3 -1.3 /
\plot  -0.3 2.3   3.3 -1.3 /
\plot  1.7 2.3   4.3 -0.3 /
\setshadegrid span <1.5pt>
\vshade  -3.3  -1.3  -1.3  <,z,,>  -1.3  -1.3  0.7  <z,,,>  0.3   0.3  2.3  <z,,,>  2.3   2.3 2.3 /
\endpicture
}
\caption{A strip in an apartment.}
\end{figure}

Let $\Gamma$ be an $\widetilde A_2$ group. If $\Gamma$ has the property that the 2-dimensional subshift
described above is irreducible, then the theory developed in \cite[Section 7]{rs} applies.
This means that one may construct an associated simple $C^*$-algebra whose structure
was analyzed in \cite{rs}. The required irreducibility result was proved in \cite[Theorem 7.10]{rs}
only for the case where $\Gamma$ is a lattice in $\pgl_3(\KK)$, where $\KK$ is a local field
of characteristic zero. The argument
of \cite[Theorem 7.10]{rs} does not apply if $\cB$ is the building of $\pgl_3(\KK)$, where $\KK$ is a local field of positive characteristic, which is the case for the group C.1 of Example \ref{C1}.
Neither does it apply to many examples constructed in \cite{cmsz}, for which $\cB$ is not the Bruhat-Tits building of a linear group.
The purpose of the present article is to show that irreducibility holds for all $\widetilde A_2$ groups. This means that the theory of \cite{rs}
now applies to any such group.

\begin{remark}
The subshift studied in \cite{rs} was defined in terms of labelled parallelograms formed by a union of
two labelled triangles of the following form.

\refstepcounter{picture}
\begin{figure}[htbp]\label{modeltile}
\hfil
\centerline{
\beginpicture
\setcoordinatesystem units <0.5cm,0.866cm>   
\setplotarea x from -5 to 5, y from 0.5 to 2        
\putrule from -1 1 to 1 1
\setlinear \plot -1 1  0 0  1 1  /
\setlinear \plot -1 1  0 2  1 1 /
\endpicture
}
\hfil
\end{figure}
However, irreducibility of that subshift is an easy consequence of
the result presented here.
\end{remark}

We now state our main result.

\medskip

\begin{theorem}\label{main}
Given any two $\widetilde A_2$ triangle labellings, these labellings can be realized as the initial and final triangles of a sequence of triangles arranged along some wall in $\cB$ as follows:
\refstepcounter{picture}
\begin{figure}[htbp]
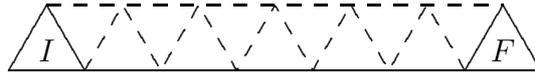
\label{fig1}
\centerline{
\beginpicture
\setcoordinatesystem units <0.5cm, 0.866cm >
\setplotarea  x from -6 to 4,  y from -1 to 0
\put {$I$} at -7 -0.7 
\put {$F$} at  5 -0.7  
\putrule from  -8 -1  to  6 -1 
\setlinear 
\plot -8 -1  -7 0   -6 -1  /
\plot  4 -1   5 0   6 -1 /
\setdashes 
\putrule from  -7 0  to  5 0
\plot
-6 -1  -5 0  -4 -1  -3  0  -2 -1  -1 0  0 -1  1 0  2 -1  3 0  4 -1 /
\endpicture
}
\hfil
\caption{Labelled triangles along a wall}
\end{figure}
\end{theorem}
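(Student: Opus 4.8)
The plan is to coordinatize the strip, observe that straightness of the two bounding walls is automatic, collapse the local step-rule to a single incidence relation on the point set $P$, and finish with an elementary counting lemma about the projective plane. First I would label the vertices of the two walls as $b_0,b_1,\dots$ (lower) and $c_0,c_1,\dots$ (upper), so that the triangles of the strip are the up-triangles $T_k=\{b_k,b_{k+1},c_k\}$ and the down-triangles $U_k=\{b_{k+1},c_k,c_{k+1}\}$; note that $T_k,U_k,T_{k+1}$ all meet at $b_{k+1}$. With the directed-edge convention of Figure~\ref{A4}, each $T_k$ receives a labelling $(f_k,h_k,m_k)\in\cT$ and each $U_k$ a labelling $(g_k,m_{k+1},h_k)\in\cT$, where $h_k$ is the edge shared by $T_k$ and $U_k$ and $m_{k+1}$ the edge shared by $U_k$ and $T_{k+1}$; the only constraints are membership in $\cT$ together with these two identifications. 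The apparent further requirement that the walls be straight is automatic: passing $T_k\to U_k\to T_{k+1}$ sweeps three chambers about $b_{k+1}$, so in the link of $b_{k+1}$ (the incidence graph of a projective plane, of girth $6$) the incoming and outgoing wall edges are joined by a path of length $3$ through two distinct vertices, and incidence of its endpoints would close a $4$-cycle; hence the endpoints are opposite, the wall is straight, and likewise at each $c_k$.

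Next comes the key reduction. To continue past $T_k$ only $h_k$ is used: choosing $g_k\in\lambda(h_k)$ determines $m_{k+1}$, and as $g_k$ runs over the line $\lambda(h_k)$ the point $m_{k+1}$ runs over $\{m\in P:h_k\in\lambda(m)\}$; then choosing $f_{k+1}\in\lambda(m_{k+1})$ determines $h_{k+1}$, running over $\{h\in P:m_{k+1}\in\lambda(h)\}$. Both moves obey the same rule, so I define the directed graph $D$ on $P$ with an arrow $u\to v$ exactly when the point $u$ lies on the line $\lambda(v)$. A strip from $T_0$ to $T_N$ is then precisely a directed walk $h_0\to m_1\to h_1\to\cdots\to m_N$ in $D$; since the first label of the final triangle may be chosen freely at the last step, realizing an arbitrary $\beta\in\cT$ as $T_N$ amounts to joining the middle label $h_0$ of $\alpha=T_0$ to the last label $m_\beta$ of $\beta$ by a directed walk of odd length. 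Crucially, $D$ depends only on $\lambda$ and not on the whole presentation $\cT$.

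The heart of the matter is then the claim that $D$ is strongly connected and not bipartite. For strong connectivity I would rule out a proper nonempty set $S$ with no outgoing arrows: writing $T=P\setminus S$, closedness of $S$ forces $\lambda(v)\subseteq T$ for every $v\in T$, so the $|T|$ distinct lines $\lambda(v)$, $v\in T$, all lie inside the $|T|$-point set $T$. But a set of $t$ points of a projective plane contains at most $t$ lines — count incident pairs (point, contained line) two ways — with equality only if every line through each of its points is included, which forces $T=P$; as $T$ is proper and nonempty this is a contradiction. Non-bipartiteness is similar: a splitting $P=P_0\sqcup P_1$ with all arrows crossing would place $|P_0|$ lines inside $P_1$ and $|P_1|$ lines inside $P_0$, forcing $|P_0|=|P_1|$, impossible since $|P|=q^2+q+1$ is odd. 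Strong connectivity then yields a directed walk from $h_0$ to $m_\beta$, and the odd period guaranteed by non-bipartiteness lets one choose its length odd, producing the required strip with $T_0=\alpha$ and $T_N=\beta$.

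I expect the main obstacle to be the reduction itself — verifying that straightness costs nothing and that the local step genuinely collapses to the single relation $u\in\lambda(v)$, so that the whole problem depends only on $\lambda$. Once this is in place the projective-plane input needed is exactly the elementary estimate that $t$ points carry at most $t$ lines, and everything else is bookkeeping.
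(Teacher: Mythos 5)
There is a genuine gap, and it sits exactly where you predicted: in the reduction. Your model declares that ``the only constraints are membership in $\cT$ together with the two edge identifications,'' and your straightness argument then asserts that the relevant length-$3$ path in the link of $b_{k+1}$ passes ``through two distinct vertices,'' hence has opposite endpoints by girth $6$. But distinctness of the two \emph{intermediate} vertices $c_k,c_{k+1}$ is not the issue; what can fail is that an \emph{endpoint} coincides with an intermediate vertex, i.e.\ $b_k=c_{k+1}$ or $c_k=b_{k+2}$, in which case the path backtracks, the girth argument says nothing, and the endpoints are in fact adjacent rather than opposite (e.g.\ $b_k=c_{k+1}$ forces $b_k\sim b_{k+2}$ inside $T_{k+1}$). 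Geometrically these degeneracies are $U_k=T_k$ and $T_{k+1}=U_k$. In your coordinates they read $m_{k+1}=f_k$ and $f_{k+1}=h_k$, and neither is excluded by the arrow relation of $D$: since $(f_k,h_k,m_k)\in\cT$ gives $h_k\in\lambda(f_k)$, the forbidden value $f_k$ is always one of the $q+1$ legitimate successors of $h_k$ in $D$, and likewise for the other exclusion. So ``strip $\Leftrightarrow$ directed walk in $D$'' fails in the direction you need: a directed walk in $D$ need not come from a strip along a straight wall. (Compare the paper's Lemma 2.1 and the criterion after Figure 16: reachability in one step requires $l_h\neq l_f$ and $p_g\neq p_f$ in addition to the incidence condition.)

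This is not cosmetic. Once the two exclusions are reinstated, each step forbids exactly one of the $q+1$ successors, and the forbidden successor is determined by the \emph{two} preceding labels via $\cT$, not by the current vertex alone; in particular the corrected transition system depends on $\cT$, so your observation that everything reduces to $\lambda$ also falls. Strong connectivity and non-bipartiteness of $D$ (your double-counting argument for those is fine, incidentally) no longer suffice: one must show that the ``one-successor-deleted'' system is still irreducible, which is delicate for small $q$ (for $q=2$ only $2$ of $3$ successors survive each step). That is precisely the content of the paper's proof, which counts how many labellings over a fixed base edge are reachable from the left and from the right, shows via inclusion--exclusion that at least $(q^2+q)/2$ punctured lines arise after two steps, and then proves a projective-plane lemma (with separate cases $q=2$, $q=3$, $q\ge 4$) producing a line meeting their union in more than $(q+1)/2$ points, so that the left- and right-reachable sets must overlap. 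A smaller instance of the same oversight occurs at your final step: $f_N$ is not quite free, since $f_N\neq h_{N-1}$ is also required when aiming at a prescribed $\beta$.
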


The rest of the article is devoted to the proof of Theorem \ref{main}.

\bigskip

\section{Proof of Irreducibility of the 1-dimensional subshift}

Fix once and for all the triangle labellings $I$ and $F$.
Consider a triangle labelling of the form below (which we refer to as $\underset{b}{\Delta}$).

\centerline{
\beginpicture
\setcoordinatesystem units  <1cm, 1.732cm>
\setplotarea x from -1 to 1, y from -2 to 0.1         
\arrow <10pt> [.2, .67] from  0 -1  to  0.2 -1 
\arrow <10pt> [.2, .67] from   -0.5 -0.5 to -0.7 -0.7 
\arrow <10pt> [.2, .67] from   0.5 -0.5 to 0.3 -0.3
\put {$_{b_3}$} [r,b] at -0.7 -0.5
\put {$_{b_2}$} [l,b] at 0.7 -0.5
\put {$_{b}$} [ t] at 0 -1.1
\put {A triangle labelling of the form $\underset{b}{\Delta}$.} at 0 -1.6
\putrule from -1 -1 to 1 -1
\setlinear \plot -1 -1 0 0 1 -1 /
\endpicture
}

Call such a labelling  $\underset{b}{\Delta}$ {\it reachable} from the {\it left} if it is the final triangle labelling in some sequence with initial triangle $I$.  

\centerline{
\beginpicture
\setcoordinatesystem units <0.5cm, 0.866cm >
\setplotarea  x from -8 to 1,  y from -2 to 0 
\putrule from  -8 -1  to  1 -1 
\setlinear 
\plot -8 -1  -7 0   -6 -1  /
\arrow <5pt> [.2, .67] from  0 -1  to  0.2 -1 
\arrow <5pt> [.2, .67] from   -0.5 -0.5 to -0.7 -0.7 
\arrow <5pt> [.2, .67] from   0.5 -0.5 to 0.3 -0.3
\put {$_{b_3}$} [r,b] at -0.7 -0.5
\put {$_{b_2}$} [l,b] at 0.7 -0.5
\put {$_{b}$} [ t] at 0 -1.1
\put {$I$} at -7 -0.7
\setlinear \plot -1 -1 0 0 1 -1 /
\endpicture
}

Similarly define {\it reachable} from the {\it right}.
\begin{figure}[htbp]\label{fig3r}
\centerline{
\beginpicture
\setcoordinatesystem units <0.5cm, 0.866cm >
\setplotarea  x from -1 to 8,  y from -1 to 0
\putrule from  -1 -1  to  8 -1 
\setlinear 
\plot  6 -1 7 0 8 -1 /
\arrow <5pt> [.2, .67] from  0 -1  to  0.2 -1 
\arrow <5pt> [.2, .67] from   -0.5 -0.5 to -0.7 -0.7 
\arrow <5pt> [.2, .67] from   0.5 -0.5 to 0.3 -0.3
\put {$_{b_3}$} [r,b] at -0.7 -0.5
\put {$_{b_2}$} [l,b] at 0.7 -0.5
\put {$_{b}$} [ t] at 0 -1.1
\put {$F$} at  7 -0.7
\setlinear \plot -1 -1 0 0 1 -1 /
\endpicture
}
\end{figure}

Note that for each edge labelling $b$ there are $q+1$ triangles of the form $\underset{b}{\Delta}$.
Therefore if we can show that there exists $b$ such that $\underset{b}{\Delta}$ is reachable from the left for more than $(q+1)/2$ values of the pair $(b_2,b_3)$ and reachable from the right for more that $(q+1)/2$ values of $(b_2,b_3)$, then there exists a labelling $(b,b_2,b_3)$ which is reachable both ways. This will prove Theorem \ref{main}.
\begin{figure}[htbp]\label{fig3*}
\centerline{
\beginpicture
\setcoordinatesystem units <0.5cm, 0.866cm >
\setplotarea  x from -8 to 8,  y from -1 to 0
\putrule from  -8 -1  to  8 -1 
\setlinear 
\plot -8 -1  -7 0   -6 -1  /
\plot  6 -1 7 0 8 -1 /
\arrow <5pt> [.2, .67] from  0 -1  to  0.2 -1 
\arrow <5pt> [.2, .67] from   -0.5 -0.5 to -0.7 -0.7 
\arrow <5pt> [.2, .67] from   0.5 -0.5 to 0.3 -0.3
\put {$_{b_3}$} [r,b] at -0.7 -0.5
\put {$_{b_2}$} [l,b] at 0.7 -0.5
\put {$_{b}$} [ t] at 0 -1.1
\put {$I$} at -7 -0.7 
\put {$F$} at  7 -0.7 
\setlinear \plot -1 -1 0 0 1 -1 /
\endpicture
}
\end{figure}

In subsequent arguments, we will need to use a criterion for a triangle labelling of the form $\underset{c}{\Delta}$
to be reachable in one step from a triangle labelling of the form $\underset{b}{\Delta}$, as in Figure \ref{newfig}.

\refstepcounter{picture}
\begin{figure}[htbp]\label{newfig}
\hfil
\centerline{
\beginpicture
\setcoordinatesystem units  <1cm, 1.732cm>  
\setplotarea x from 1 to 7, y from -1.2 to 0.1      
\arrow <10pt> [.2, .67] from  2 -1  to  2.2 -1
\arrow <10pt> [.2, .67] from   1.5 -0.5 to 1.3 -0.7
\arrow <10pt> [.2, .67] from   2.5 -0.5 to 2.3 -0.3
\putrule from 1 -1 to 3 -1
\setlinear \plot 1 -1 2 0 3 -1 /
\setplotarea x from -1 to 1, y from -1.2 to 0.1         
\arrow <10pt> [.2, .67] from  4 -1  to  4.2 -1
\arrow <10pt> [.2, .67] from   3.5 -0.5 to 3.3 -0.7
\arrow <10pt> [.2, .67] from   4.5 -0.5 to 4.3 -0.3
\put {$_{b}$} [t] at 2 -1.1
\put {$_{c}$} [t] at 4 -1.1
\put {$_{\bullet}$} at 3 -1
\put {$_{\bullet}$} at 1 -1
\put {$_{\bullet}$} at 5 -1
\put {$_{\bullet}$} at  2 0
\put {$_{\bullet}$} at  4 0
\putrule from 3 -1 to 5 -1
\setlinear \plot 3 -1 4 0 5 -1 /
\setdashes
\putrule from 2 0 to 4 0
\endpicture
}
\caption{}
\end{figure}

\begin{lemma}\label{added}
Figure \ref{newfig} is possible in an apartment of $\cB$
if and only if $c\not\in \lambda(b)$.
\end{lemma}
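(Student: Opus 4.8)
The plan is to reduce the statement to a computation inside the link of the vertex $p_2$ shared by the two triangles, which is the incidence graph of the local projective plane at $p_2$. First I would realize the picture in the Cayley graph of $\G$. Label the bottom vertices of Figure~\ref{newfig} as $p_1,p_2,p_3$ from left to right and the two apexes as $q_1$ (over the $b$-triangle) and $q_2$ (over the $c$-triangle). Reading the arrows, the base edge $b$ runs $p_1\to p_2$ and the base edge $c$ runs $p_2\to p_3$, so $p_1=p_2 b^{-1}$ and $p_3=p_2 c$; likewise $q_1=p_2 b_2$ and $q_2=p_2 c_3^{-1}$. With $\tau(p_2)$ fixed, the three tiles of the figure are exactly the three chambers containing $p_2$ that lie above the broken line $p_1 p_2 p_3$.

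Next I would identify the link of $p_2$ with the plane $(P,L)$. The type-$(\tau(p_2)+1)$ neighbours of $p_2$ are the $p_2 x$ with $x\in P$, which I call the points $\mathrm{Pt}(x)$; the type-$(\tau(p_2)-1)$ neighbours are the $p_2 y^{-1}$ with $y\in P$, the lines $\mathrm{Ln}(y)$. A short computation with $\cT$ shows that $\mathrm{Pt}(x)$ and $\mathrm{Ln}(y)$ span a chamber with $p_2$ precisely when $x^{-1}y^{-1}\in P$, i.e.\ when $yxw=1$ for some $w\in P$, i.e.\ $x\in\lambda(y)$ by property (i). Thus incidence in the link is exactly the incidence of $(P,L)$, and under this dictionary $p_1=\mathrm{Ln}(b)$ and $p_3=\mathrm{Pt}(c)$.

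The geometric heart is then the following. In any apartment the six chambers around $p_2$ correspond to the six edges of the hexagonal link, and a straight wall through $p_2$ selects two opposite link-vertices, cutting the hexagon into two arcs of three edges, i.e.\ three chambers on each side. Hence Figure~\ref{newfig} embeds in an apartment iff $p_1=\mathrm{Ln}(b)$ and $p_3=\mathrm{Pt}(c)$ are opposite vertices of such a hexagon, equivalently are joined by a length-three path forming half a $6$-cycle. In the incidence graph of a projective plane a point and a line are at distance $3$ (hence can be opposite) iff they are non-incident, and at distance $1$ otherwise. This gives both directions: from a hypothesised realization I read off that $p_1,p_3$ are opposite, so $c\notin\lambda(b)$; conversely, given $c\notin\lambda(b)$ I pick any point $\mathrm{Pt}(b_2)$ on $\mathrm{Ln}(b)$, take $\mathrm{Ln}(c_3)$ to be the unique line joining $\mathrm{Pt}(c)$ and $\mathrm{Pt}(b_2)$, and the three resulting flags form the required half-hexagon, which assembles into three chambers of a single apartment. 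Therefore the figure is possible iff $c\notin\lambda(b)$.

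The step needing the most care is the equivalence ``realizable in an apartment $\Leftrightarrow$ the bounding line is a straight wall $\Leftrightarrow$ opposite in the link.'' I must justify that a flat fan of three chambers as drawn extends to a genuine apartment exactly when its bounding line is a geodesic (so that each side of the wall carries precisely three of the six residue chambers), and I must pin down correctly that it is \emph{non-incidence} (distance $3$ in the link), not incidence, that produces the straight configuration; getting this orientation backwards would flip the conclusion to $c\in\lambda(b)$.
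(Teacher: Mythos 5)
Your proposal is correct and follows essentially the same route as the paper: both pass to the link of the shared vertex, identify it with the incidence graph of the projective plane $(P,L)$ so that the two outer base vertices become the line $v\lambda(b)$ and the point $vc$, and then characterize realizability of the figure by these two being opposite in a hexagonal apartment of that spherical building, which happens exactly when they are non-incident, i.e.\ $c\notin\lambda(b)$. The paper phrases the forward direction via contractibility (an edge between $v\lambda(b)$ and $vc$ would make the flat configuration impossible) rather than your distance-$1$ versus distance-$3$ dichotomy, but this is the same underlying fact.
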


\begin{proof}
Fix a vertex $v\in \cB$.  Since the 1-skeleton of $\cB$ is the Cayley graph of $(\Gamma, P)$, the vertex $v$ may be considered
as an element of $\Gamma$.
The choice of $v$ is irrelevant, by transitivity of the action of $\Gamma$.

As explained in the introduction, the set $S_v$ of vertices adjacent to $v$ has the structure of a finite projective plane.
The points of this projective plane are $\{ vx \, ; \, x\in P \}$ and the lines are  $\{ v\lambda(x) \, ; \, x\in P \}$.
Recall that $\lambda(x)=x^{-1}$ in the group $\Gamma$. Figure \ref{newfig} is therefore equivalent to
Figure \ref{newfigA}.
\refstepcounter{picture}
\begin{figure}[htbp]\label{newfigA}
\hfil
\centerline{
\beginpicture
\setcoordinatesystem units  <1cm, 1.732cm>  
\setplotarea x from 1 to 7, y from -1.2 to 0.1      
\arrow <10pt> [.2, .67] from  2 -1  to  2.2 -1
\arrow <10pt> [.2, .67] from   1.5 -0.5 to 1.3 -0.7
\arrow <10pt> [.2, .67] from   2.5 -0.5 to 2.3 -0.3
\putrule from 1 -1 to 3 -1
\setlinear \plot 1 -1 2 0 3 -1 /
\setplotarea x from -1 to 1, y from -1.2 to 0.1         
\arrow <10pt> [.2, .67] from  4 -1  to  4.2 -1
\arrow <10pt> [.2, .67] from   3.5 -0.5 to 3.3 -0.7
\arrow <10pt> [.2, .67] from   4.5 -0.5 to 4.3 -0.3
\put {$_{v\lambda(b)}$} [r] at 0.9 -1.1
\put {$_{vc}$} [l] at 5.1 -1.1
\put {$_{v}$} [t] at 3 -1.1
\put {$_{\bullet}$} at 3 -1
\put {$_{\bullet}$} at 1 -1
\put {$_{\bullet}$} at 5 -1
\put {$_{\bullet}$} at  2 0
\put {$_{\bullet}$} at  4 0
\putrule from 3 -1 to 5 -1
\setlinear \plot 3 -1 4 0 5 -1 /
\setdashes
\putrule from 2 0 to 4 0
\endpicture
}
\caption{}
\end{figure}

If $c\in \lambda(b)$, then there is an edge in $\cB$ between $v\lambda(b)$ and $vc$.
Figure \ref{newfigA} is therefore impossible, by contractibility of the building $\cB$.

On the other hand, if $c\not\in \lambda(b)$ then $v\lambda(b)$ and $vc$ are not adjacent in
$S_v$. Now  $v\lambda(b)$ and $vc$ lie in a hexagon $H$ whose vertices belong to $S_v$.
This is because the projective plane $S_v$ has the structure of a spherical building,
whose apartments are hexagons. The vertices of the hexagon $H$ are alternately points and lines
of the projective plane $S_v$. The only way in which the line $v\lambda(b)$ and the point
$vc$ can fail to be adjacent in the hexagon $H$ is if they are opposite vertices of
the hexagon, as shown in Figure \ref{hexagon}.

\refstepcounter{picture}
\begin{figure}[htbp]\label{hexagon}
\hfil
\centerline{
\beginpicture
\setcoordinatesystem units <0.5cm, 0.866cm>
\setplotarea  x from -2 to 3,  y from -1 to 1
\setlinear \plot   1 1  -1 1  -2 0  -1 -1   1 -1   2 0  1 1 /
\put {$_{\bullet}$} at -2 0
\put {$_{\bullet}$} at 2 0
\put {$_{v\lambda(b)}$} [r] at -2.4  0
\put {$_{vc}$} [l] at 2.3 0
\endpicture
}
\caption{}
\end{figure}

This means that Figure \ref{newfigA} is possible in $\cB$, where each labelled triangle has one edge on the hexagon $H$.

\end{proof}

\begin{lemma}\label{red} If $b\in P$ then the numbers
$${\mathcal L}(b) = \#\{(b_2,b_3): (b,b_2,b_3)\text{ is reachable from the left}\},$$
$${\mathcal R}(b) = \#\{(b_2,b_3): (b,b_2,b_3)\text{ is reachable from the right}\}$$
are {\it independent of $b$}.
\end{lemma}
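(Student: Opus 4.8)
The plan is to deduce the statement from a monotonicity property along a highly connected digraph on the point set $P$, after using \lemref{added} to pin down the one-step transition exactly.

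First I would make the one-step rule completely explicit. Reconsider the hexagon $H$ produced in the proof of \lemref{added}: if $\underset{b}{\Delta}=(b,b_2,b_3)$ is given and we pass to the right to a triangle $\underset{c}{\Delta}=(c,c_2,c_3)$, then in the projective plane $S_v$ the four vertices $\lambda(b),\,b_2,\,\lambda(c_3),\,c$ occur consecutively along $H$, with $\lambda(b)$ opposite $c$. Hence the admissible bases are exactly the $c$ with $c\notin\lambda(b)$, and for each such $c$ the successor is \emph{unique}: the intermediate vertex $\lambda(c_3)$ is forced to be the line joining the point $b_2$ to the point $c$, which determines $c_3$ and then $c_2$. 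In particular every triangle has exactly $q^2$ one-step successors, one with each base $c\notin\lambda(b)$, and dually exactly $q^2$ predecessors, one with each base $b$ such that $c\notin\lambda(b)$.

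Next I would count using the right edge. Fix $b$ and recall that the $q+1$ triangles $\underset{b}{\Delta}$ are in bijection with the points $b_2$ of the line $\lambda(b)$, the right edge determining $b_3$. Let $R_b\subseteq\lambda(b)$ be the set of right edges of those $\underset{b}{\Delta}$ that are reachable from the left, so $\mathcal L(b)=|R_b|$. The key computation is that, for a fixed admissible $c\notin\lambda(b)$, the map sending a source right edge $b_2$ to the successor right edge $c_2$ is a bijection $\lambda(b)\to\lambda(c)$: as $b_2$ runs over the $q+1$ points of $\lambda(b)$ the joining line runs over all $q+1$ lines through $c$, so $c_3$ runs over all left edges of base-$c$ triangles and the successor runs over all $q+1$ triangles $\underset{c}{\Delta}$. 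Since the reachable-from-the-left set is closed under passing to successors, this bijection carries $R_b$ into $R_c$, whence $\mathcal L(b)=|R_b|\le|R_c|=\mathcal L(c)$ whenever $c\notin\lambda(b)$.

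Finally I would invoke connectivity. Consider the digraph on $P$ with an arrow $b\to c$ precisely when $c\notin\lambda(b)$; it is strongly connected, in fact of diameter at most $2$, since for given $b,b'$ the bad intermediate points (those with $c\in\lambda(b)$ or with $b'\in\lambda(c)$) number at most $2(q+1)<q^2+q+1$, leaving some $c$ with $b\to c\to b'$. The previous step says $\mathcal L$ is nondecreasing along every arrow, so chaining the inequalities around cycles forces $\mathcal L$ to be constant. For $\mathcal R$ I would run the dual argument: the reachable-from-the-right set is closed under passing to predecessors, so parametrising $\underset{b}{\Delta}$ by its left edge and using the predecessor description above, the induced map on left edges is again a bijection, giving $\mathcal R(c)\le\mathcal R(b)$ when $c\notin\lambda(b)$; the same connectivity forces $\mathcal R$ constant. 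The only real work is the first two paragraphs — extracting from \lemref{added} that the successor is uniquely determined by its base and that the induced map on right (resp.\ left) edges is a bijection; this is exactly where the incidence axioms (two distinct points lie on a unique line, each point lies on $q+1$ lines) are used. Once these are in hand, the monotonicity-plus-connectivity conclusion is immediate.
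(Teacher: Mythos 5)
Your proof is correct and follows essentially the same route as the paper's: the paper likewise connects $b$ to an arbitrary $b'$ through an intermediate base $c$ with $c\notin\lambda(b)$ and $b'\notin\lambda(c)$ (using the same count $2(q+1)<q^2+q+1$), observes that each labelling of $\underset{b}{\Delta}$ then determines a unique labelling of $\underset{b'}{\Delta}$ and vice versa, and concludes ${\mathcal L}(b)\le{\mathcal L}(b')$ with equality by symmetry. Your write-up merely makes explicit the uniqueness of the one-step successor and the induced bijection on right edges, details which the paper asserts without proof.
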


\begin{proof}
It is clearly enough to prove the assertion for ${\mathcal L}(b)$.
Given  $b'\in P$, we must show that ${\mathcal L}(b)= {\mathcal L}(b')$.
Now the diagram in Figure \ref{fig4} can be completed by choosing $c$ such that $c\not\in\lambda(b)$ and $b'\not\in\lambda(c)$.  This is possible, since there exist  $q+1$ elements $c\in\lambda(b)$, there exist $q+1$ elements $c$ such that $b'\in\lambda(c)$, and $2(q+1)<q^2+q+1=\#(P)$.

\refstepcounter{picture}
\begin{figure}[htbp]\label{fig4}
\centerline{
\beginpicture
\setcoordinatesystem units <0.5cm, 0.866cm >
\setplotarea  x from -8 to 8,  y from -1 to 0
\putrule from  -8 -1  to  8 -1
\setlinear
\plot -8 -1  -7 0   -6 -1  /
\plot  6 -1 7 0 8 -1 /
\put {$_{b}$} [ t] at 3 -1.1
\put {$_{c}$} [ t] at 5 -1.1
\put {$_{b'}$} [ t] at 7 -1.1
\put {$I$} at -7 -0.7 
\setlinear 
\plot 2 -1   3 0  4 -1  5 0  6 -1  7 0  8 -1 /
\endpicture
}
\caption{}
\end{figure}

Choose and fix such an element $c\in P$. Then each labelling of $\underset{b}{\Delta}$ uniquely determines the labelling of $\underset{b'}{\Delta}$, and vice versa. That is,  for fixed $b,c,b'$, the number of labellings of $\underset{b}{\Delta}$ is the same as the number of labellings of $\underset{b'}{\Delta}$. It follows that ${\mathcal L}(b)\le {\mathcal L}(b')$. By symmetry, 
${\mathcal L}(b)= {\mathcal L}(b')$.
\end{proof}

It follows from Lemma \ref{red} that, in order to prove Theorem \ref{main}, it is enough to find an elements $b_1, b_2\in P$ such that 
\begin{subequations}\label{enough}
\begin{eqnarray}
{\mathcal L}(b_1) > (q+1)/2\, , \label{enough1a}\\
{\mathcal R}(b_2) > (q+1)/2. \label{enough1b}
\end{eqnarray}
\end{subequations}
It is clearly enough to verify (\ref{enough1a}).

Given the initial triangle labelling $I$, denote by $D$ the set of all $d\in P$ for which Figure \ref{fig55} is possible.
Thus $D$ contains precisely $q$ elements. 
For each $d\in D$ let $S_d$ denote the set of $f\in P$ such that Figure \ref{fig55} is possible.
Therefore $\#(S_d)=q$.
\refstepcounter{picture}
\begin{figure}[htbp]\label{fig55}
\hfil
\centerline{
\beginpicture
\setcoordinatesystem units  <1cm, 1.732cm>
\setplotarea x from -1 to 1, y from -1.2 to 0.1         
\arrow <10pt> [.2, .67] from  0 -1  to  0.2 -1 
\arrow <10pt> [.2, .67] from   -0.5 -0.5 to -0.7 -0.7 
\arrow <10pt> [.2, .67] from   0.5 -0.5 to 0.3 -0.3
\putrule from -1 -1 to 1 -1
\setlinear \plot -1 -1 0 0 1 -1 /
\arrow <10pt> [.2, .67] from  2 -1  to  2.2 -1 
\arrow <10pt> [.2, .67] from   1.5 -0.5 to 1.3 -0.7 
\arrow <10pt> [.2, .67] from   2.5 -0.5 to 2.3 -0.3
\put {$I$} at 0 -0.7
\put {$_{d}$} [b,r] at 1.5 -0.4
\put {$_{f}$} [l,b] at 2.7 -0.5
\putrule from 1 -1 to 3 -1
\putrule from 0 0 to 2 0
\setlinear \plot 1 -1 2 0 3 -1 /
\endpicture
}
\caption{}
\end{figure} 

\begin{lemma}\label{technical} If $d_1, d_2\in D$ and $d_1\neq d_2$, then
$S_{d_1}\cap S_{d_2}$ contains at most one element. 
\end{lemma}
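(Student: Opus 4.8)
The plan is to reduce the claim to the single most basic fact about a projective plane, namely that two distinct points lie on a unique common line. The bridge I would establish first is the observation that whenever $f\in S_d$ one necessarily has $d\in\lambda(f)$; that is, the point $d$ lies on the line $\lambda(f)$.

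To prove this observation, recall that $f\in S_d$ means that the up-triangle one step to the right of $I$ in Figure~\ref{fig55} can be realized with left edge $d$ and right edge $f$; write $c$ for its base edge. This realized triangle is a chamber of $\cB$, hence an $\wt A_2$ triangle labelling, and so corresponds to an element of the triangle presentation, read off as the triple $(c,f,d)\in\cT$, equivalently $cfd=1$. By the cyclic invariance of property (ii) we also have $(f,d,c)\in\cT$, and property (i) applied to the pair $(f,d)$ then yields precisely $d\in\lambda(f)$. The only delicate point here is bookkeeping: one must read the three edges of the realized triangle in the correct cyclic order, using the orientation conventions of Figure~\ref{A4}, so that $f$ and $d$ occupy adjacent cyclic positions and property (i) delivers the incidence $d\in\lambda(f)$ rather than an unwanted variant.

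Granting the observation, the lemma is immediate. Suppose $f\in S_{d_1}\cap S_{d_2}$ with $d_1\neq d_2$. Then $d_1\in\lambda(f)$ and $d_2\in\lambda(f)$, so the line $\lambda(f)$ passes through the two distinct points $d_1$ and $d_2$. Since in a projective plane there is a unique line through two distinct points, $\lambda(f)$ must be the line joining $d_1$ and $d_2$; and because $\lambda\colon P\to L$ is a bijection, this determines $f$ uniquely. Hence $S_{d_1}\cap S_{d_2}$ contains at most one element. I expect essentially all of the content to reside in the observation $f\in S_d\Rightarrow d\in\lambda(f)$, the projective-plane step being a single line; the main (and only minor) obstacle is the orientation bookkeeping just described. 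It is worth noting that neither the hypotheses $d_i\in D$ nor the geometric constraints built into the definition of $S_d$ are used --- only $d_1\neq d_2$ enters the argument.
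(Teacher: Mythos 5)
Your proof is correct and follows the same route as the paper: the paper's proof is exactly the one-line observation that $f\in S_{d_1}\cap S_{d_2}$ forces $d_1,d_2\in\lambda(f)$, whence the line $\lambda(f)$, and hence $f$, is determined by the two distinct points. Your additional verification of the incidence $d\in\lambda(f)$ via the triple $(c,f,d)\in\cT$ and properties (i)--(ii) is a correct spelling-out of what the paper leaves implicit.
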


\begin{proof}
 If $f\in S_{d_1}\cap S_{d_2}$ then $d_1$, $d_2\in\lambda(f)$.  The two points $d_1$, $d_2$ in the projective plane determine the line $\lambda(f)$ uniquely.  That is, $f$ is uniquely determined.
\end{proof}

Let $S=\displaystyle \bigcup_{d\in D} S_d$. Then $S$ is the set of elements $f\in P$ such that a diagram like Figure \ref{fig55} is possible, for the given initial triangle $I$.
There are $q(q-1)/2$ sets of the form $S_{d_i}\cap S_{d_j}$, each of which contains at most one element.
It follows from the exclusion-inclusion principle that
\begin{equation}\label{recall}
\#(S)\geq q.q-\frac{q(q-1)}{2}=\frac{q^2+q}{2}\,.
\end{equation}
This gives a lower bound on the number of possible edge labels $f$ in Figure \ref{fig55}.
Now let $f\in S$ be such an edge label. Then $f\in S_d$ for some $d\in P$.  Consider diagrams of the form illustrated in Figure \ref{fig66}.

\refstepcounter{picture}
\begin{figure}[htbp]\label{fig66}
\hfil
\centerline{
\beginpicture
\setcoordinatesystem units  <1cm, 1.732cm>
\setplotarea x from -1 to 5, y from -1.2 to 0.1         
\arrow <10pt> [.2, .67] from  0 -1  to  0.2 -1 
\arrow <10pt> [.2, .67] from   -0.5 -0.5 to -0.7 -0.7 
\arrow <10pt> [.2, .67] from   0.5 -0.5 to 0.3 -0.3
\putrule from -1 -1 to 1 -1
\setlinear \plot -1 -1 0 0 1 -1 /
\arrow <10pt> [.2, .67] from  2 -1  to  2.2 -1 
\arrow <10pt> [.2, .67] from   1.5 -0.5 to 1.3 -0.7 
\arrow <10pt> [.2, .67] from   2.5 -0.5 to 2.3 -0.3
\put {$I$} at 0 -0.7
\put {$_{d}$} [r,b] at 1.3 -0.5
\put {$_{f}$} [l,b] at 2.6 -0.6
\putrule from 1 -1 to 3 -1
\putrule from 0 0 to 2 0
\setlinear \plot 1 -1 2 0 3 -1 /
\setplotarea x from -1 to 1, y from -1.2 to 0.1         
\arrow <10pt> [.2, .67] from  4 -1  to  4.2 -1 
\arrow <10pt> [.2, .67] from   3.5 -0.5 to 3.3 -0.7 
\arrow <10pt> [.2, .67] from   4.5 -0.5 to 4.3 -0.3
\put {$_{g}$} [l] at 3.6 -0.6
\put {$_{k}$} [l,b] at 4.7 -0.5
\put {$_{h}$} [t] at 4 -1.1
\put {$_{x}$} [t] at 3 -1.1
\put {$_{\bullet}$} at 3 -1
\putrule from 3 -1 to 5 -1
\setlinear \plot 3 -1 4 0 5 -1 /
\setdashes 
\putrule from 2 0 to 4 0
\endpicture
}
\caption{}
\end{figure}

In the projective plane of nearest neighbours of $x$ label the points $p_f$, $p_g$ and  lines $l_f$, $l_h$ as in Figure \ref{pp}. (By duality, the words `point' and `line' could be interchanged here. The specified choice makes the wording of a later argument easier.)

\refstepcounter{picture}
\begin{figure}[htbp]\label{pp}
\hfil
\centerline{
\beginpicture
\setcoordinatesystem units  <1cm, 1.732cm>  
\setplotarea x from 1 to 7, y from -1.2 to 0.1      
\arrow <10pt> [.2, .67] from  2 -1  to  2.2 -1
\arrow <10pt> [.2, .67] from   1.5 -0.5 to 1.3 -0.7
\arrow <10pt> [.2, .67] from   2.5 -0.5 to 2.3 -0.3
\put {$_{d}$} [r,b] at 1.3 -0.5
\put {$_{f}$} [l,b] at 2.6 -0.6
\putrule from 1 -1 to 3 -1
\setlinear \plot 1 -1 2 0 3 -1 /
\setplotarea x from -1 to 1, y from -1.2 to 0.1         
\arrow <10pt> [.2, .67] from  4 -1  to  4.2 -1
\arrow <10pt> [.2, .67] from   3.5 -0.5 to 3.3 -0.7
\arrow <10pt> [.2, .67] from   4.5 -0.5 to 4.3 -0.3
\put {$_{g}$} [l] at 3.6 -0.6
\put {$_{k}$} [l,b] at 4.7 -0.5
\put {$_{h}$} [t] at 4 -1.1
\put {$_{x}$} [t] at 3 -1.1
\put {$_{p_f}$} [r] at 0.9 -1.1
\put {$_{l_h}$} [l] at 5.1 -1.1
\put {$_{l_f}$} [r] at 1.9 0.1
\put {$_{p_g}$} [l] at 4.1 0.1
\put {$_{\bullet}$} at 3 -1
\put {$_{\bullet}$} at 1 -1
\put {$_{\bullet}$} at 5 -1
\put {$_{\bullet}$} at  2 0
\put {$_{\bullet}$} at  4 0
\putrule from 3 -1 to 5 -1
\setlinear \plot 3 -1 4 0 5 -1 /
\setdashes
\putrule from 2 0 to 4 0
\endpicture
}
\caption{}
\end{figure}

Then $(g,h,k)$ is reachable from $f$, (i.~e. ~the diagram is possible), if and only if
$l_h\neq l_f$, $p_g\neq p_f$ and $p_g\in l_h\cap l_f$.  That is $p_g = (l_f-\{p_f\})\cap l_h$,
where $l_h\neq l_f$.

For  $h\in P$ the set of possible $g$ is in bijective correspondence with the set
\begin{eqnarray*}
T_h&=&\bigcup_{f\in S-\{h\}}\{(l_f-\{p_f\})\cap l_h\}\\
&=& l_h\cap\bigcup_{f\in S-\{h\}}l_f-\{p_f\}\,.
\end{eqnarray*}

If we can show that $\#(T_h)>\frac{q+1}{2}$ for some $h$, then (\ref{enough1a}) is satisfied with $b_1=h$. 

The proof of Theorem \ref{main} therefore reduces to the following combinatorial result about projective planes.
Recall from (\ref{recall}) that $\#(S)\geq\frac{q^2+q}{2}$.  
\begin{lemma}
In a projective plane of order $q$,
let $\left\{l_j:1\leq j\leq \frac{q^2+q}{2}\right\}$ be a family of distinct lines.   For each $j$, let $p_j$ be a point on $l_j$ and let $l_j'=l_j-\{p_j\}$.
Then there exists a line $m$ such that
$$\#\left(m\cap \bigcup\{l_j':l_j\neq m\}\right)>\frac{q+1}{2}\,.$$
\end{lemma}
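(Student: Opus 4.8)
The plan is to bound, for a typical line $m$, how many of its points are \emph{not} covered by the punctured lines, and to show that for at least one line this deficiency drops below $(q+1)/2$. Write $N=\frac{q^2+q}{2}$ for the family size and, writing $C(m)$ for the cardinality appearing in the lemma, for each point $x$ let $a(x)$ be the number of indices $j$ with $x\in l_j'$ (the number of punctured lines through $x$). A point $x\in m$ is counted in $C(m)$ exactly when some $l_j'$ with $l_j\neq m$ passes through it, and whether this happens depends only on $a(x)$ together with the identity of the single responsible line in the borderline case $a(x)=1$. Summing over all $q^2+q+1$ lines of the plane then gives the exact identity
$$\sum_{m}C(m)=q\,A_1+(q+1)\,A_{\ge2},$$
where $A_1$ and $A_{\ge2}$ count the points with $a(x)=1$ and $a(x)\ge2$ respectively: a point with $a(x)\ge2$ is covered from every line through it, whereas a point with $a(x)=1$ is covered from all but the one line $l_j$ responsible for it, and a point with $a(x)=0$ is never covered.

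The heart of the matter is to control the distribution of the numbers $a(x)$ by two global counts. First, since each $l_j'$ has exactly $q$ points,
$$\sum_x a(x)=Nq.$$
Second, since two distinct lines of the projective plane meet in exactly one point, each unordered pair of indices $\{i,j\}$ contributes at most one point to the double count below, so
$$\sum_x\binom{a(x)}{2}\le\binom{N}{2}.$$
These relations say that the total incidence mass $Nq\sim q^3/2$ is distributed over the covered points while the pairwise overlap is capped at $\binom N2\sim q^4/8$. A Cauchy--Schwarz estimate $\#\{x:a(x)\ge1\}\ge\big(\sum_x a(x)\big)^2\big/\sum_x a(x)^2$ then forces the covered set to be large; in fact the complementary set $Z=\{x:a(x)=0\}$ of everywhere-uncovered points has $\#(Z)$ of order $q$, far below the threshold $\frac{q^3-1}{2q}\sim q^2/2$ that turns out to be needed.

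I would then finish by averaging. Using the identity for $\sum_m C(m)$ together with the lower bound on $\#\{x:a(x)\ge1\}$ (and the trivial $q\,A_1+(q+1)\,A_{\ge2}\ge q\,\#\{x:a(x)\ge1\}$) one obtains
$$\sum_m C(m)>\tfrac12(q+1)(q^2+q+1),$$
so the average value of $C(m)$ over all lines exceeds $(q+1)/2$, and hence some individual line $m$ does as well. That is precisely the conclusion of the lemma.

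The hard part is the extremal step in the middle, namely turning the two moment relations into the strict inequality $\sum_m C(m)>\frac12(q+1)(q^2+q+1)$. The crude Cauchy--Schwarz bound comfortably suffices once $q\ge5$, but it is genuinely tight for the smallest orders, where one must retain the $A_{\ge2}$ term and argue through the exact optimization of minimizing $q\,A_1+(q+1)\,A_{\ge2}$ subject to the two moment constraints (this still yields $28>26$ for $q=3$ and $53>52.5$ for $q=4$). For $q=2$ the averaging bound can fall below $(q+1)/2$, so the statement must instead be checked directly on the Fano plane, where both the plane and the admissible families are unique and few in number. I expect this small-order bookkeeping, rather than the main counting, to be the real obstacle to a clean uniform write-up.
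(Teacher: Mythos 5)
Your argument is correct in outline and takes a genuinely different route from the paper's. The paper argues by contradiction with a greedy covering estimate: if every line met $\bigcup\{l_j':l_j\neq m\}$ in at most $\frac{q+1}{2}$ points, then each successive $l_k'$ would add at least $\lceil\frac{q-1}{2}\rceil$ new points to the running union, forcing $\#(l_1'\cup\cdots\cup l_{(q^2+q)/2}')$ past the total number $q^2+q+1$ of points; this works for $q\geq 4$, while $q=2$ is done by hand and $q=3$ occupies the bulk of the proof via an explicit coordinatization of the unique plane of order $3$. You instead average $C(m)$ over \emph{all} lines using the exact identity $\sum_m C(m)=qA_1+(q+1)A_{\geq 2}$ and control the distribution of the covering multiplicities $a(x)$ by the first moment $\sum_x a(x)=Nq$ and the pairwise-overlap bound $\sum_x\binom{a(x)}{2}\leq\binom{N}{2}$; I checked that Cauchy--Schwarz then gives $\#\{x:a(x)\geq 1\}\geq q^3(q+1)/(q^2+3q-2)$ and that $q\cdot\#\{x:a(x)\geq 1\}>\frac{1}{2}(q+1)(q^2+q+1)$ reduces to $q^4-4q^3-2q^2-q+2>0$, which indeed holds exactly for $q\geq 5$. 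Your claimed outcomes for the refined optimization also check out: for $q=3$ the two moment constraints force $\#\{x:a(x)\geq1\}\geq 7$ and then (via convexity of $\binom{t}{2}$) at least $7$ points with $a(x)\geq 2$, giving $\sum_m C(m)\geq 28>26$, and for $q=4$ one gets $\geq 53>52.5$; and for $q=2$ the averaging genuinely fails (three lines forming a triangle with the third points deleted give average $9/7<3/2$), so the direct Fano-plane check you call for is unavoidable, though it is two lines long (the paper's Case 1 does exactly this). What your route buys is the elimination of the paper's long explicit $q=3$ computation, replaced by a small integer optimization; what it costs is that the $q=3,4$ optimizations still need to be written out carefully -- in particular the second-moment constraint is essential there, since the first moment alone permits the incidences to concentrate on few points and drive $qA_1+(q+1)A_{\geq2}$ below the threshold -- and that, like the paper, you must treat $q=2$ separately. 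To be a complete proof, the three asserted computations ($q\geq5$ inequality, the $q=3,4$ minimizations, and the Fano check) must be supplied, but none of them hides a difficulty.
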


\begin{proof} 
This divides into 3 separate cases, which are dealt with in increasing order of difficulty. \\
{\bf Case 1:} $q=2$. Here  $\frac{q^2+q}{2}=3$, and so there are three distinct lines $l_1$, $l_2$, $l_3$, each containing three points.  Each set $l_j'$ therefore contains exactly two points.

Choose a line $m$ which meets a point of $l_1'-l_2$ and a point of $l_2'-l_1$.
Then $m\cap(l_1'\cup l_2')$ contains $2>\frac{3}{2}$ elements.
\begin{figure}[htbp]\label{spp}
\centerline{
\beginpicture
\setcoordinatesystem units <1cm, 0.5cm >
\setplotarea  x from -4 to 4,  y from -3 to 3
\putrule from  -2.5 1  to  2.5  1 
\setlinear 
\plot -1 -3    2  3  /
\plot  1 -3   -2  3  /
\put {$_{\bullet}$} at 0 -1
\put {$_{\bullet}$} at 0.5  -2  
\put {$_{\bullet}$} at -0.5 -2 
\put {$_{\bullet}$} at  1  1 
\put {$_{\bullet}$} at  -1 1 
\put {$m$}[b] at 0 1.3
\put {$l_1$}[r]  at -2  2.8  
\put {$l_2$}[l]  at 2.1  2.8 
\put {$p_1$}[l] at  0.7  -2 
\put {$p_2$}[r]  at  -0.7  -2 
\endpicture
}
\hfil
\end{figure}

{}\hfill

\noindent {\bf Case 2:} $q\geq 4$.\\
Each line contains $q+1$ points, so $\#(l_j')=q$.  Two distinct lines meet in exactly one point.  Hence
\begin{equation}\label{ineq1}
\#(l_1'\cup l_2'\cup l_3')\geq 3q-3\,.
\end{equation}
Assume that the conclusion of the Lemma is false.
Then we {\it claim} that for $3\leq k\leq\frac{q^2+q}{2}$,
\begin{equation}\label{ineq2}
\#(l_1'\cup l_2'\cup\cdots\cup l_k')\geq(3q-3)+(k-3)\lceil\frac{q-1}{2}
\rceil\,,
\end{equation}
where $\lceil t \rceil$ denotes the ceiling of $t$, the least integer not less than $t$.

We prove the claim by induction.
If $k=3$ then it is true, by (\ref{ineq1}).
Assume that (\ref{ineq2}) holds for a given value of $k$.
Since we are assuming that the conclusion of the Lemma fails,
$$\#(l_{k+1}'\cap(l_1'\cup\cdots \cup l_k'))\leq\#(l_{k+1}\cap(l_1'\cup\cdots\cup l_k'))\leq\frac{q+1}{2}\,.$$
Hence,
$$\#(l'_{k+1}-(l'_1\cup\cdots\cup l'_k))\geq q-\frac{q+1}{2}=\frac{q-1}{2}\,.$$
Therefore
\begin{eqnarray*}
\#(l_1'\cup l_2'\cup\cdots\cup l_k'\cup l_{k+1}')
    &\geq (3q-3)+(k-3)\lceil\frac{q-1}{2}\rceil+\lceil\frac{q-1}{2}\rceil \\
    &=(3q-3)+((k+1)-3)\lceil\frac{q-1}{2}\rceil\,.
\end{eqnarray*}
Thus we have established (\ref{ineq2}).

In particular, since (\ref{ineq2}) holds for $k=(q^2+q)/2$, and there are $q^2+q+1$ points in the projective plane, we have
\begin{equation}\label{ineq3}
q^2+q+1\geq(3q-3)+\left(\frac{q^2+q}{2}-3\right)\lceil\frac{q-1}{2}\rceil\,.
\end{equation}

\noindent Now (\ref{ineq3}) has been derived from the assumption that the conclusion of the Lemma was false. Therefore all that is required now is to show that (\ref{ineq3}) is false.
Now (\ref{ineq3}) fails when $q=4$, since in that case
$$q^2+q+1=21\not\geq 23=(3q-3)+\left(\frac{q^2+q}{2}-3\right)\lceil\frac{q-1}{2}\rceil\,.$$
On the other hand, if $q\geq 5$, write $q=r+5$, $r\geq 0$.
Then
\begin{eqnarray*}
4\left(3q-3+\left(\frac{q^2+q}{2}-3\right)\left(\frac{q-1}{2}\right)-(q^2+q+1)\right)
    &=&q^3-4q^2+q-10\\
    &=&r^3+11r^2+36r+20\\
    &\geq& 20\,.
\end{eqnarray*}
Therefore (\ref{ineq3}) also fails when $q\geq 5$.
This proves Case 2.

{}\hfill

\noindent {\bf Case 3:} $q=3$. This requires separate treatment.
Here $\frac{q^2+q}{2}=6$, $\frac{q+1}{2}=2$.

Given distinct lines $l_1, l_2 \ldots, l_6$ we delete a point from each to obtain sets $l_1',\ldots,l_6'$.  We must find a line $m$ such that
\begin{equation}\label{three}
\#\left(m\cap\bigcup\{l_j':l_j\neq m\}\right)>2\,.
\end{equation}
It is known that there is a unique projective plane of order 3, namely the Desarguesian plane arising from a 3-dimensional vector space over $\FF_3$ \cite[Theorem 2.3.1]{bl}.

There are thirteen points and thirteen lines in the projective plane. Label the points $0,1,2,\ldots,12$ and
label the lines $(0),(1),(2),\ldots,(12)$, as indicated in the table below \cite[Section 1.4]{bl}.
For example, line $(8)$ contains the points $5,6,8,1$.
\begin{table}[hbt]
\begin{center}
\begin{tabular}{||c|c|c|c|c|c|c|c|c|c|c|c|c||}
\hline
(12)&(11)&(10)&(9)&(8)&(7)&(6)&(5)&(4)&(3)&(2)&(1)&(0)\\ \hline
1&2&3&4&5&6&7&8&9&10&11&12&0\\
2&3&4&5&6&7&8&9&10&11&12&0&1\\
4&5&6&7&8&9&10&11&12&0&1&2&3\\
\underline{10}&\underline{11}&12&0&1&2&3&4&5&6&7&8&9\\ \hline
\end{tabular}
\end{center}
\end{table}

By permuting the lines $l_1, l_2 \ldots, l_6$, if necessary,
we may suppose that $l_1\cap l_2$ is not equal to either of the excluded points $p_1$ or $p_2$. 

To check this assertion, suppose that it does not already hold for the given choice of $l_1, l_2$.
Since each point is incident at most four of the lines $l_1, l_2 \ldots, l_6$, we may assume that $l_1\cap l_2=p_1$ but that $l_1\cap l_5\ne p_1$ and  $l_1\cap l_6\ne p_1$. If $l_1\cap l_5\ne p_5$  or $l_1\cap l_6\ne p_6$ we are done. On the other hand, if 
$l_1\cap l_5 = p_5$, $l_1\cap l_6 = p_6$ and $p_5\ne p_6$ then $l_5 \cap l_6$ is not equal to either $p_5$ or $p_6$. It remains to consider the case $l_1\cap l_5 = p_5$, $l_1\cap l_6 = p_6$ with $p_5=p_6$.
In that case, $l_2\cap l_5\ne p_5$, $l_2\cap l_6\ne p_6$ (since $l_1\cap l_2 =p_1$) and  either $l_2\cap l_5\ne p_2$ or $l_2\cap l_6\ne p_2$.

Having verified this assertion, we can assume that $l_1\cap l_2$,  $p_1$ and $p_2$ are three noncollinear points. Now the automorphism group $\pgl_3(\FF_3)$ acts transitively on triples of noncollinear points.
Map these three points to the points $2, 10, 11$ respectively
We may therefore suppose that $l_1$, $l_2$ are lines $(12), (11)$ respectively with excluded points $10, 11$ (underlined in the table).  Thus 
$$l_1'=\{1,2,4\}\qquad l_2'=\{2,3,5\}\,.$$
Now for $j=3,4,5,6$, the set $l_j'$ contains a point not in $l_1$ or $l_2$, namely one of the points 0,6,7,8,9,12. Let $j\in\{3,4,5,6\}$. 

\begin{itemize}
\item[(a)] If $0\in l_j'$ then
$$(0)\cap(l_1'\cup l_2'\cup l_j')=\{1,3,0\}\,,$$
and
$$(9)\cap(l_1'\cup l_2'\cup l_j')=\{4,5,0\}\,.$$
One can choose as line $m$ to satisfy inequality (\ref{three}) whichever of $(0)$, $(9)$ is not equal to $l_j$. Both choices of $m$ may be possible.\\
\item[(b)] If $6\in l_j'$ then 
$$(8)\cap(l_1'\cup l_2'\cup l_j')=\{1,5,6\}\,,$$
and
$$(10)\cap(l_1'\cup l_2'\cup l_j')=\{4,3,6\}\,.$$
One can choose as line $m$ whichever of $(8)$, $(10)$ is not equal to $l_j$.\\
\item[(c)] If $7\in l_j'$ then
$$(9)\cap(l_1'\cup l_2'\cup l_j')=\{4,5,7\}\,,$$
so if $l_j\neq(9)$  we can choose $m=(9)$.

\noindent If $8\in l_j'$ then 
$$(8)\cap(l_1'\cup l_2'\cup l_j')=\{1,5,8\}\,,$$
so if $l_j\neq(8)$ we can choose $m=(8)$.

\noindent If $9\in l_j'$ then 
$$(0)\cap(l_1'\cup l_2'\cup l_j')=\{1,3,9\}\,,$$
so if $l_j\neq(0)$ we can choose $m=(0)$.

\noindent If $12\in l_j'$ then 
$$(10)\cap(l_1'\cup l_2'\cup l_j')=\{4,3,12\}\,,$$
so if $l_j\neq(10)$ we can choose $m=(10)$.\\

\item[(d)] By choosing $j=3,4,5,6$ in parts (a), (b) and (c) above that,
we see that we can choose $m$ to satisfy inequality (\ref{three}) except in one case.
Up to a permutation of the set $\{3, 4, 5, 6\}$,
this is the case where 
$$l_3,\,l_4,\,l_5,\,l_6=(9),\,(8),\,(0),\,(10)$$
respectively with
$$7\in(9)',\, 8\in(8)',\, 9\in(0)',\, 12\in(10)'\,.$$

\noindent We work with the three lines $l_1=(12)$, $l_3=(9)$, $l_6=(10)$.
There are two possibilities to consider:

If $6\in (10)'$ then $(7)\cap(l_1'\cup l_3'\cup l_6')=\{2,7,6\}$; so take $m=(7)$.

If $6\not\in(10)'$ then $(10)'=\{3,4,12\}$, $(2)\cap(l_1'\cup l_3'\cup l_6')=\{1,7,12\}$; so take $m=(2)$.
\end{itemize}
\end{proof}

\medskip

\begin{remark}
Careful examination of the proof of Theorem \ref{main} shows that six steps are enough to get from initial to final triangle, exactly as indicated in Figure \ref{fig1}.
\end{remark}

\bigskip

\end{document}